\documentclass[12pt]{article}

\usepackage{amsmath}
\usepackage{amssymb, amsthm}
\usepackage{url}
\usepackage[all]{xy}

\sloppy 

\newtheorem{lemma}{Lemma}[section]
\newtheorem{prop}[lemma]{Proposition}
\newtheorem{thm}[lemma]{Theorem}
\newtheorem{defn}[lemma]{Definition}

\newtheorem{remark}[lemma]{Remark}

\DeclareMathOperator \sy {sym}
\DeclareMathOperator \su {sum}

\newcommand{\AQ}{A}

\newcommand{\mr}[1]{\stackrel{#1}{\longrightarrow}}
\newcommand{\mmr}[1]{\buildrel {#1} \over \hookrightarrow}

\date{}

\title{Column symmetric polynomials}

\author{Eduardo Dubuc  and Anders Kock}

\begin{document}

\maketitle 

\section*{Introduction}
Let $\AQ$ be a commutative ring. It is classical how symmetric polynomials in $ A[x_{1}, \ldots 
,x_{n}]$ are uniquely expressible as polynomials in the $n$ elementary 
symmetric polynomials, cf.\ e.g.\ \cite{[vdWI]} \S 29. For instance for 
$n=2$, the two elementary polynomials are $\sigma _{1}:=x_{1}+x_{2}$ and 
$\sigma_{2}:=x_{1}x_{2}$; and the symmetric polynomial $x_{1}^{2} + x_{2}^{2}$ 
may be expressed as $\sigma_{1}^{2}- 2\sigma_{2}$:
$$x_{1}^{2} + x_{2}^{2}= (x_{1}+x_{2})^{2}- 2 x_{1}x_{2}  
.$$
Modulo the ideal $I$ generated by $x_{1}^{2}$ and $x_{2}^{2}$, we 
therefore also have
$$x_{1}x_{2}= \frac{1}{2}(x_{1}+x_{2})^{2},$$
provided $\frac{1}{2}$ exists in the base ring $A$. 

In fact, we have more generally that if $\AQ$ contains the ring 
${\mathbb Q}$ of rationals, then, modulo $I$,  any symmetric 
polynomial in $\AQ[x_{1}, \ldots 
,x_{n}]$ may be uniquely expressed as a polynomium in the 
single symmetric polynomium $x_{1}+ \ldots + x_{n}$, where $I$ is the 
ideal generated by the $x_{i}^{2}$s. This is a well known and 
important fact, called ``the symmetric functions 
property''  in  \cite{[SDG]} Exercise I.3.3 (quoting Dubuc and Joyal). 

It is a result in this direction we intend to generalize from 
dimension $1$ to dimension $m$. We are 
considering the polynomial ring in $m\times n$ variables $x_{i,j}$; the kind of 
symmetry we consider is not with respect to all the 
$m n$  
variables; we consider these variables organized in an $m\times n$ 
matrix,  
and we only consider invariance under the $n!$ permutations of the $n$ 
columns. The result refers to what we can assert, modulo 
the ideal $I$ generated by the degree 2 monomials 
$\{x_{ij} x_{i'j}\}_{j = 1, \ldots , n,\; i = 1, \ldots , m,\; i' = 1, \ldots , m}$.

 The result 
 asserts that any polynomial, invariant under the $n!$ 
 permutations of the columns can, modulo $I$, be expressed uniquely as a 
polymonial in the $m$ ``row-sums'', 
$\{s_i = x_{i,1} + x_{i,2} + \ldots +x_{i,n} \}_{i = 1, \ldots , m}$. 
The classical ``symmetric 
functions property'' is the special case where $m=1$.

An application of this Theorem concerns formal exactness of closed 
differential 1-forms is sketched in  Section \ref{formsx} below.

\medskip

Throughout $\AQ$ will be a commutative ring. It is assumed to contain 
${\mathbb Q}$.  All the 
$\AQ$-modules which we consider are free. Therefore, we  use 
terminology from linear algebra, as if $\AQ$ were a field.

\section{Polynomials in a matrix of variables} \label{polymatrix}

\subsection{ The free commutative monoid}\label{FCMx}

The free commutative monoid $M(X)$ on a set $X$ is in a natural way a
graded monoid. We call its elements {\em monomials} in $X$, we call $X$ the
set of {\em variables}; we write the monoid structure
multiplicatively. We shall give an explicit presentation of $M(X)$.\footnote{An equivalent description is that $M(X)$ is the 
set of finite {\em multi-subsets} of $X$.} 

Let $k$ be a positive integer; we 
let $[k]$ denote the set $[k]=\{1,2, \ldots ,k\}$.  
Then a  monomial $\omega$ of degree $k$ may be explicitly 
presented by a 
function $f: [k] \to X$; we write the monomium thus presented   
\mbox{$\omega_{f}:= x_{f(1)} x_{f(2)}  \ldots  x_{f(k)}$.} Since the
variables commute, it follows that
two functions $f$ and \mbox{$f':[k]\to  X$} present the same monomium
iff 
they differ by a permutation $ \varepsilon :[k]\to [k]$ of $[k]$,
i.e. $f'= f\circ \varepsilon$. 

Later on in the proof of Proposition \ref{injx}, 
we shall need a  finer notation:
We denote by $\lVert f\rVert $ the set of  all functions $f\circ \varepsilon$ 
for $\varepsilon \in S_{k}$ (where $S_{k}$ is the group of 
permutations of $[k]$). 
Thus  $\lVert f\rVert $ is the  
orbit of $f$ under the right action (by precomposition) of $S_{k}$.
The monomials are actually indexed by these orbits, we have a well defined monomium  $\omega _{\lVert f\rVert}$, and  
\mbox{$\omega_{\lVert f \rVert} =\omega_{\lVert f' \rVert} \iff 
  \lVert f \rVert = \lVert f' \rVert$}.

\subsection{The polynomial ring in a matrix of 
variables}  \label{admissibles}

If $A$ is any commutative 
ring, the polynomial ring $A[X]$ with coefficients in $A$ in a set
$X$ of indeterminates is
the free commutative $A$-algebra  on the set $X$. It may be
constructed by a two-stage process: first, construct the free
commutative monoid $M(X)$ on $X$, 
and then construct the free $A$-module %\rmv{$A[M(X)]$}%
on the set $M(X)$.  
 It inherits its multiplication from that of $M(X)$. 
It is a {\em graded} $A$-algebra, with the degree-$k$ part being the
linear submodule with basis  the monomials of degree $k$.

We shall be interested in some further structure which the algebra 
$A[X]$ has, in the case where the set $X$ is given as a
product set 
$[m]\times [n]$.   We think of this $X$ as the set of $m\times n$
matrices ($m$ rows, $n$ columns) with entries \mbox{$x_{i,j}$ ($i\in
[m], j\in [n]$),} and write \mbox{$A[M^{m\times n}]:= A[[m]\times [n]] = 
A[x_{1,1}, \ldots ,x_{m,n}].$}

A function $[k]\mr{} [m]\times [n]$ is given by a pair 
$(f,g) $, where $f:[k]\to [m]$ and $g: [k]\to  [n]$. The 
monomium presented by such function we denote 
$\omega _{(f,g)}$, or just 
$\omega_{f,g}$. Thus
\begin{equation} \label{omegafg}
\omega_{f ,g}= \prod _{l\in [k]} x_{f(l),g(l)} \;=\;
x_{f(1)g(1)} x_{f(2)g(2)} \; \ldots \; x_{f(k)g(k)}
\end{equation}
Clearly, when $g$ is monic, then so is any other $g'$, for any other 
presentation $( f',g' )$ of the same monomium. 
Therefore, the following notion is well defined. 
\begin{defn}The monomial $\omega_{f,g}$ is {\em 
admissible} if $g: [k]\to  [n]$ is monic.
A polynomium $\in \AQ[M^{m \times n}]$ is called {\em
admissible} if it is a linear 
combination of admissible monomials.
\end{defn}

So a monomium in the $x_{i,j}$'s is admissible if it does 
not contain two factors from any of the columns, like 
$x_{i,j}\cdot  x_{i',j}$. In particular, it 
does not contain any squared factor $x_{i,j}^{2}$. 
Clearly, admissible polynomials are of degree $\leq n$.

If $\omega$ is not admissible, it is called inadmissible. If $\omega$
is inadmissible, 
then so is $\omega \cdot \theta$ for any monomium $\theta$. It 
follows that the linear subspace of $\AQ [M^{m \times n}]$ 
generated by the inadmissible monomials is an ideal 
\mbox{$I\subseteq \AQ [M^{m \times n}]$.}
The quotient algebra $\AQ [M^{m \times n}]/I$ may be 
identified with the linear subspace (not a subalgebra)  
$\AQ _a[M^{m \times n}] \subseteq \AQ [M^{m \times n}]$
generated by the admissible monomials, with the projection morphism 
\mbox{$\AQ[M^{m \times n}] \mr{} \AQ _a[M^{m \times
n}]$}  being the map which discards all terms containing an
inadmissible factor. The   algebra structure of $\AQ _a[M^{m \times
n}]$ is thus given by the
multiplication table $\{x_{i,j}\cdot  x_{i',j} = 0 \}_{i \in [m],\; i' \in
[m],\; j \in [n]}$, and no other 
relations.\footnote{$\AQ _{a}[M^{m\times n}$] is an example of 
what sometimes is called a 
{\em Weil-algebra} over $\AQ$; in particular, it is 
finite-dimensional as an $ \AQ$-module. Likewise, the algebra $\AQ 
_{\leq n}[y_{1}, \ldots ,y_{m}]$ to be considered below, is a 
Weil-algebra.}
 The algebra $\AQ _a[M^{m \times n}]$ inherits a grading from
that 
of $\AQ [M^{m \times n}]$. 
Note that in $\AQ _a[M^{m \times n}]$
all non-zero elements are of degree $\leq n$.

\vspace{1ex}

Among the polynomials in $\AQ [M^{m \times n}]$ we have the $m$ 
``row-sums'' \mbox{$s_{i}$ \mbox{, $i=1, \ldots , m$}} 
(the sum of the entries in the $i$th row); they are all admissible:
\begin{equation} \label{row-sum}
s_{i}:= \sum _{j\in [n]} x_{i,j} \;=\; x_{i,1} + x_{i,2}, \,\ldots\, x_{i,n}.
\end{equation}
Consider any map $f: [k]\to [m]$. 
By the distributive law, 
i.e.\  by multiplying out the product,  we have the second 
equality sign in
$$\prod_{l\in [k]}s_{f(l)} = \prod_{l\in [k]}\sum _{j\in 
[n]}x_{f(l)j}   =\sum _{[k]\mr{g} [n]}\; \prod_{l\in [k]} x_{f(l)g(l)},$$
where $g$ ranges over the set of all maps $[k]\to [n]$.
 The admissible terms here are those where $g$ is injective, so 
 modulo $I$, equivalently,  
discarding inadmissible terms,   
\begin{equation}\label{twox}
\prod_{l\in [k]}s_{f(l)} \;=\; \sum _{[k] \mmr{g} [n]} \,
\prod_{l\in [k]} x_{f(l),g(l)} \hspace{3ex}  
\text{in the algebra $\AQ_a [M^{m \times n}]$.} 
\end{equation}
where $g$ now ranges over the set of monic maps $[k]\to [n]$.

\subsection{Column symmetric polynomials}

Let $\sigma$ be a permutation $\sigma : [n]\to [n]$, i.e.\  $\sigma
\in S_{n}$. One may permute the $n$ 
columns of the matrix $X$ of variables $x_{i,j}$ by $\sigma$. More
explicitly, 
$\sigma$ permutes 
 the monomials by the recipe: 
 \begin{equation}\label{actx}
   \sigma \cdot 
   \omega_{f,g}:= 
   \omega_{f ,  \sigma \circ  g} \, .
\end{equation}
This is well defined with respect to different
presentations of the same monomial. 
Thus, the set of monomials carry a left action by $S_{n}$. 
If $g: [k] \to [n]$ is injective, then so is $\sigma \circ  g$, for any 
permutation $\sigma : [n] \to [n]$, hence the subset of admissible
monomials is stable under the action. The action clearly extends to
an action on the polynomial algebras $\AQ [M^{m \times n}]$
and  $\AQ _a[M^{m \times n}]$. Note that the subspace
inclusion as well as the quotient morphism preserve the action.

The polynomials which are invariant under the action 
of $S_{n}$, we call {\em column symmetric}. These elements form subalgebras of 
$\AQ [M^{m \times n}]$ and of $\AQ _a[M^{m \times n}]$,
 and they  deserve the notation  
$\sy (\AQ [M^{m \times n}])$ and $\sy (\AQ _a[M^{m \times
n}])$, respectively.

In the sequel we study the structure of the elements of the algebra  
$\sy (\AQ _a[M^{m \times n}]) \subseteq 
                                \AQ _a[M^{m \times n}]$. This is where 
								  we need that the ring $\AQ$ contains 
								  ${\mathbb Q}$.

								  If a finite group $S$ acts on an algebra 
								  $C$ over a commutative ring
$A$, the elements in $C$ invariant 
under the action of $S$ form a subalgebra $\sy _{S} (C)$ of 
{\em $S$-symmetric} or {\em $S$-invariant} elements. If $A$ contains
the field of rational numbers ${\mathbb Q}$ as a subring, we further
have that the subalgebra $\sy _{S}(C)\subseteq C$,  seen just as a
linear subspace, is a retract, with 
retraction the symmetrization operator $\sy$ given, for $a\in C$, by
\begin{equation}  \label{opx}
\sy (a):= p^{-1}\cdot \sum _{\sigma \in S}\sigma \cdot a,\end{equation}
where $p$ is the cardinality of $S$. And we have
$$ a \; is \; invariant  \iff  \; a = \sy (a).$$

\begin{prop}
Any two admissible monomials $\omega _{f,g}\,$,  
$\omega_{f,g'}$ with the same \mbox{$f: [k]\to [m]$} are in the same orbit
of the action by $S_{n}$. It follows that
$\sy (\omega_{f,g}) = \sy (\omega_{f,g'})$, see (\ref{opx}). 
\end{prop}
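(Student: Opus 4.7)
The plan is to exhibit, given two admissible monomials $\omega_{f,g}$ and $\omega_{f,g'}$ sharing the same row-function $f:[k]\to[m]$, an explicit permutation $\sigma\in S_n$ with $\sigma\cdot\omega_{f,g}=\omega_{f,g'}$. By the definition of the action in (\ref{actx}), this amounts to finding $\sigma$ with $\omega_{f,\sigma\circ g}=\omega_{f,g'}$, for which it suffices to arrange $\sigma\circ g=g'$ as functions $[k]\to[n]$.

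The construction uses admissibility in an essential way: since $\omega_{f,g}$ and $\omega_{f,g'}$ are admissible, both $g$ and $g'$ are monic, so their images $g([k])$ and $g'([k])$ are subsets of $[n]$ of size $k$. Define $\sigma$ on $g([k])$ by $\sigma(g(l)):=g'(l)$; this is well defined because $g$ is injective, and it is itself injective on its domain because $g'$ is injective. Since the complements $[n]\setminus g([k])$ and $[n]\setminus g'([k])$ both have size $n-k$, any bijection between them extends $\sigma$ to an element of $S_n$. By construction $\sigma\circ g=g'$, so $\sigma\cdot\omega_{f,g}=\omega_{f,\sigma\circ g}=\omega_{f,g'}$, which places the two monomials in a common $S_n$-orbit.

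The second assertion then follows formally from the general fact that the symmetrization operator (\ref{opx}) is constant on orbits: if $\omega_{f,g'}=\sigma\cdot\omega_{f,g}$, then reindexing the sum $\sum_{\tau\in S_n}\tau\cdot\omega_{f,g'}$ via $\tau\mapsto\tau\sigma^{-1}$ shows it equals $\sum_{\tau\in S_n}\tau\cdot\omega_{f,g}$, so $\sy(\omega_{f,g'})=\sy(\omega_{f,g})$.

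There is essentially no serious obstacle here; the only subtle point is recognizing that the hypothesis of admissibility (that is, $g$ and $g'$ being injective) is exactly what makes it possible to realize the passage from $g$ to $g'$ via a permutation of $[n]$ rather than merely a partial reshuffling. If $g$ or $g'$ were allowed to repeat values, the intended $\sigma$ would not even be well defined as a function on $g([k])$, and the statement would fail.
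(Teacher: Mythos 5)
Your proof is correct and follows the same route as the paper: both arguments produce a permutation $\sigma\in S_n$ with $\sigma\circ g=g'$ (the paper merely recalls its existence, noting there are $(n-k)!$ choices, while you construct one explicitly) and then conclude that symmetrization is constant on orbits by reindexing the sum. No gaps.
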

\begin{proof} Recall that if $g$ and $g':$ $[k] \to [n]$ are monic,
then  we may find a permutation $[n] \mr{\tau} [n]$ 
with $\tau \circ g = g'$. There are in fact $(n-k)!$ such 
permutations. With such $\tau$, we have $\tau \cdot \omega_{f, 
g}= \omega _{f, g'}$. It follows that 
$\sy (\omega_{f,g})$ and $\sy (\omega_{f,g'})$ have the same terms
but in different order.
\end{proof}

The row-sum polynomials $s_{i}\,$, see (\ref{row-sum}), are
clearly column-symmetric, and the product 
$\prod_{l\in [k]}s_{f(l)}$, as a $k$-fold product of homogeneous
degree 1 polynomials, is a homogeneous degree $k$ polynomial, and 
likewise column symmetric. 
 
 \begin{prop} \label{main1x} For  any admissible monomium 
 $\omega_{f,g}$ of degree $k$, we have (discarding inadmissible terms)
  $$
 \sy (\omega_{f,g}) =  
 \frac{(n-k)!}{n!} \prod_{l\in [k]}s_{f(l)}.
 $$
 \end{prop}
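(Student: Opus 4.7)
The plan is to unfold the definition of $\sy$ and then match the resulting sum against formula (\ref{twox}). By (\ref{opx}), with $p = n!$ and the action from (\ref{actx}),
$$\sy(\omega_{f,g}) = \frac{1}{n!}\sum_{\sigma \in S_n} \sigma \cdot \omega_{f,g} = \frac{1}{n!}\sum_{\sigma \in S_n} \omega_{f,\sigma \circ g}.$$
So the task reduces to understanding how the monic map $\sigma \circ g$ varies as $\sigma$ ranges over $S_n$.

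First I would invoke the observation already used in the proof of the preceding proposition: since $g$ is monic, each monic map $g':[k] \hookrightarrow [n]$ arises as $\sigma \circ g$ for exactly $(n-k)!$ permutations $\sigma \in S_n$ (pick the image freely on $[k]$, then permute the remaining $n-k$ elements of $[n]$ arbitrarily). Consequently,
$$\sum_{\sigma \in S_n} \omega_{f,\sigma \circ g} \;=\; (n-k)! \sum_{g:[k] \hookrightarrow [n]} \omega_{f,g'},$$
where the right-hand sum is over monic maps $g'$.

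Second, by (\ref{twox}) (which is precisely the identity in $\AQ_a[M^{m\times n}]$ asserting that the row-sum product expands, modulo inadmissible terms, as a sum over monic maps), the right-hand sum equals $\prod_{l \in [k]} s_{f(l)}$. Combining the two displays and dividing by $n!$ yields the desired identity. There is no real obstacle; the only point to keep straight is the double counting by $(n-k)!$, which is where the coefficient $(n-k)!/n!$ comes from, and the fact that equation (\ref{twox}) already encodes the passage from monic-indexed sums to row-sum products modulo $I$.
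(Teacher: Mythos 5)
Your proof is correct and takes essentially the same route as the paper's: both partition $S_n$ into the $(n-k)!$-element fibres lying over the monic maps $[k]\to [n]$, so that the symmetrizing sum collapses to $(n-k)!$ times the sum over monic maps, and then invoke equation (\ref{twox}). (Minor slips only: the index in your second display should be $g'$, and the fibre count is "determined on the image of $g$, free on the remaining $n-k$ elements" --- but the count $(n-k)!$ and the argument are right.)
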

\begin{proof}
Given a monic map $[k] \mmr{g} [n]$, let $\sigma$ be any permutation 
$[n]\mr{\sigma}[n]$  extending $g$, so $\sigma (l) = g(l)$ for $l=1,
\ldots ,k$. The set of such 
$\sigma$ 's, we denote $C(g)$. The set $C(g)$ has cardinality 
$(n-k)!$, by simple combinatorics.
We  clearly have 
\footnote{Notation: $[k] \mmr{g} [n]$ denotes an injective function; 
the sum ranges over all such.}
$$
S_{n}= \coprod _{[k] \mmr{g} [n]} C(g).
$$   
Therefore,  we may rewrite $\sum _{\sigma \in S_{n}}\sigma \cdot 
\omega _{f,g}
$ as 
follows  

$$
 \sum _{[k] \mmr{g} [n]} \; \sum_{\sigma \in C(g)} \; 
                           \prod _{l\in [k]}x_{f(l)\sigma(l)} 
    = \sum _{[k] \mmr{g} [n]} \; \sum_{\sigma \in C(g)} \; 
                           \prod _{l\in [k]}x_{f(l)g(l)}
$$
since for each  $g$ and each $\sigma \in C(g)$, $\sigma (l) = g(l)$,
for 
 $l\in [k] \subseteq [n]$. Therefore, for a  given $g$, the terms in
the summation 
 over $C(g)$   are equal, and there are $(n-k)!$ of them, so the
equation continues
$$
= \sum_{[k] \mmr{g} [n]} (n-k)! \; \prod _{l\in [k]}x_{f(l)g(l)} 
=(n-k)! \sum_{[k] \mmr{g} [n]} \;  \prod _{l\in [k]}x_{f(l)g(l)}\, ,
$$
and this expression  
equals $\displaystyle{(n-k)! \prod_{l\in [k]}s_{f(l)}}$ by equation 
(\ref{twox}).
Dividing by $n!$ now gives the desired equation.
\end{proof}
From the Proposition, we may deduce (recall that ${\mathbb 
Q}$ is a subring of  
$\AQ$)
\begin{prop}\label{main2x}
 Every column symmetric admissible polynomial can be 
 expressed in $\AQ_a[M^{m \times n}]$ as a polynomial in the
$s_{i}\,$'s. (This expression can be interpreted as an expression,
modulo the ideal $I$ of inadmissibles, in the polynomial ring 
 $\AQ [M^{m \times n}]$.)
 \end{prop}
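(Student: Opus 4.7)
The plan is to deduce the proposition as a nearly immediate corollary of Proposition \ref{main1x}, exploiting the fact that the symmetrization operator $\sy$ is linear and preserves admissibility. First I would observe that the $S_{n}$-action on $\AQ[M^{m \times n}]$ restricts to an action on $\AQ_a[M^{m \times n}]$, since for a monic map $g : [k] \hookrightarrow [n]$ and $\sigma \in S_n$, the composite $\sigma \circ g$ is again monic; hence the formula (\ref{opx}) defines a linear endomorphism $\sy : \AQ_a[M^{m \times n}] \to \AQ_a[M^{m \times n}]$ whose image is exactly $\sy(\AQ_a[M^{m \times n}])$, and a polynomial $P \in \AQ_a[M^{m \times n}]$ is column symmetric iff $P = \sy(P)$.

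Next, given a column symmetric admissible $P$, I would expand it in the basis of admissible monomials, $P = \sum_{i} c_{i}\, \omega_{f_{i}, g_{i}}$ with $c_{i} \in \AQ$ and each $g_{i} : [k_{i}] \hookrightarrow [n]$ monic. Using $P = \sy(P)$ and linearity of $\sy$, this yields
$$
P \;=\; \sum_{i} c_{i}\, \sy(\omega_{f_{i}, g_{i}}).
$$
Applying Proposition \ref{main1x} term by term then gives
$$
P \;=\; \sum_{i} c_{i}\, \frac{(n-k_{i})!}{n!}\, \prod_{l \in [k_{i}]} s_{f_{i}(l)},
$$
which exhibits $P$ explicitly as a polynomial in $s_{1}, \ldots, s_{m}$ with coefficients in $\AQ$; the division by $n!$ is where the hypothesis $\mathbb{Q} \subseteq \AQ$ is used.

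For the parenthetical assertion, I would invoke that the quotient map $\pi : \AQ[M^{m \times n}] \to \AQ_a[M^{m \times n}]$ is a surjective $\AQ$-algebra homomorphism, and that each row-sum $s_{i}$ is admissible and hence fixed by $\pi$. Consequently, if we form the same polynomial expression $Q(s_{1}, \ldots, s_{m}) \in \AQ[M^{m \times n}]$ using the formula above, it maps under $\pi$ to $P$, so $P = Q(s_{1}, \ldots, s_{m})$ holds modulo $I$ in the original polynomial ring.

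There is essentially no obstacle here: Proposition \ref{main1x} is doing all the work. The only point requiring care is the preservation of admissibility by $\sy$ (and hence the fact that the expansion of $P$ in admissible monomials is legitimate as input to Proposition \ref{main1x}), which is immediate from the $S_n$-stability of admissible maps.
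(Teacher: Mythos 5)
Your proposal is correct and follows essentially the same route as the paper: expand the column symmetric polynomial in admissible monomials, use linearity of $\sy$ together with $P = \sy(P)$, and apply Proposition \ref{main1x} term by term. The extra care you take about the $S_n$-stability of admissibles and the behaviour of the quotient map is sound but implicit in the paper's own argument.
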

 \begin{proof} Any admissible polynomial 
  $h \in \AQ_a[M^{m \times n}]$ is a 
 linear combination of admissible monomials, and $\sy$ is linear; by 
 Proposition \ref{main1x}
  $\sy$ of an admissible monomium is a polynomial in the 
 $s_{i}$\,'s. Therefore also  $\sy (h)$ is so.
 If $h$ is furthermore column symmetric, 
 $h =\sy (h)$, then $h$ itself is expressed as a polynomial 
 of the $s_{i}$\,'s,
 \mbox{ $h = G(s_{1}, \ldots ,s_{m})$} for some polynomium 
 $G \in \AQ([m]) = \AQ[y_{1}, \ldots ,y_{m}]$.
\end{proof}
 
\vspace{1ex}
 
 We  shall formulate the results so far and some of its consequences
in the category ${\mathcal A}$ of commutative $\AQ$-algebras. 

Consider the algebra
 $\AQ[y_{1}, \ldots ,y_{m}]$. Since it is the free algebra in the
 generators $y_{i}$, and \mbox{$s_{i }\in \sy (\AQ [M^{m \times n}])$,}  there is a unique algebra map (preserving degree) 
$S: \AQ[y_{1}, \ldots  ,y_{m}] \to 
                  \sy (\AQ [M^{m \times n}]) \subseteq 
                       \AQ [M^{m \times n}])$,
namely the one which sends $y_{i}\in \AQ[y_{1}, \ldots
,y_{m}]$ to $s_{i}$. 

{Let $J$ be the ideal in $\AQ[y_{1}, \ldots , y_{m}]$ generated by the monomials of degree $n+1$.}  
The quotient algebra $\AQ[y_{1}, \ldots ,y_{m}]/J$ may be 
identified with the linear subspace (not a subalgebra)  
\mbox{$\AQ_{\leq n}[y_{1}, \ldots ,y_{m}] \subseteq \AQ [y_{1}, \ldots ,y_{m}]$} 
of polynomials of degree less or equal to
$n$ {, the algebra structure given by the multiplication table
 $ \{ (x_{f(1)}\, x_{f(2)} \, \ldots \, x_{f(n+1)} = 0 \}_{f:[n+1] \to [m] }$}, 
 and no other relations.

It follows immediately from the respective multiplication 
tables (alternatively since $S$ sends the ideal $J$ into the ideal $I$) 
that we have an algebra map:

\vspace{-3ex}

\begin{equation} \label{fivex}
 \AQ_{\leq n}[y_{1}, \ldots ,y_{m}] \mr{s}
  \sy \AQ_a [M^{m \times n}]
 \end{equation} 
 making  the diagram below commutative: 
 \begin{equation} \label{smaps}
 \xymatrix
      {
       \AQ[y_{1}, \ldots  ,y_{m}] \ar[r]^S \ar@<1ex>@{>>}[d]
      & \sy (\AQ [M^{m \times n}]) \ar@<1ex>@{>>}[d]        \\
         \AQ_{\leq n}[y_{1}, \ldots  ,y_{m}]             
                                                     \ar[r]^{s}
       & \sy (\AQ_a [M^{m \times n}])      }
\end{equation}
The vertical maps are quotient maps which discard terms of degree 
 $>n$, respectively  inadmissible terms.
 Thus the map $s$ discards the inadmissible terms from the values of $S$.

 \begin{prop}\label{injx} 
The algebra map $s$  is injective.
\end{prop}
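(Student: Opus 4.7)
The plan is to map a basis of $\AQ_{\leq n}[y_1,\ldots,y_m]$ under $s$ and show the image is linearly independent in $\sy(\AQ_a[M^{m\times n}])$. A basis of the source consists of the monomials $y_{f(1)}\,y_{f(2)}\cdots y_{f(k)}$ for $k\leq n$, in which only the orbit $\lVert f\rVert$ under precomposition by $S_k$ matters (equivalently, only the multi-subset of $[m]$ of size $k$ determined by $f$). For any such basis element, since $k\leq n$, pick a monic $g:[k]\to[n]$; then by Proposition~\ref{main1x},
\[
s\bigl(y_{f(1)}\cdots y_{f(k)}\bigr)\;=\;\prod_{l\in[k]}s_{f(l)}\;=\;\frac{n!}{(n-k)!}\,\sy(\omega_{f,g}),
\]
and the scalar $n!/(n-k)!$ is invertible in $\AQ$ since $\AQ\supseteq\mathbb{Q}$. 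Hence injectivity of $s$ reduces to linear independence, in $\AQ_a[M^{m\times n}]$, of the family $\{\sy(\omega_{f,g})\}$ indexed by classes $\lVert f\rVert$ with $f:[k]\to[m]$, $k\leq n$ (one fixed monic $g$ per class).

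The key step is to analyse the $S_n$-orbits on the set of admissible monomials, proving that $\omega_{f,g}$ and $\omega_{f',g'}$ of the same degree $k$ lie in the same $S_n$-orbit iff $\lVert f\rVert=\lVert f'\rVert$. The ``only if'' direction is immediate: since $\sigma\cdot\omega_{f,g}=\omega_{f,\sigma\circ g}$ never touches $f$, the class $\lVert f\rVert$ is preserved (modulo the $S_k$-reindexing already absorbed in the notion of monomium). For the converse, given $\lVert f\rVert=\lVert f'\rVert$, pick $\varepsilon\in S_k$ with $f'=f\circ\varepsilon$; then $g\circ\varepsilon$ and $g'$ are both monic $[k]\to[n]$, so by the Proposition preceding~\ref{main1x} there is $\tau\in S_n$ with $\tau\circ g\circ\varepsilon=g'$, and therefore $\tau\cdot\omega_{f,g}=\omega_{f,\tau\circ g}=\omega_{f\circ\varepsilon,\tau\circ g\circ\varepsilon}=\omega_{f',g'}$.

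It follows that $\sy(\omega_{f,g})=\frac{1}{n!}\sum_{\sigma\in S_n}\sigma\cdot\omega_{f,g}$ is supported, in the admissible-monomial basis of $\AQ_a[M^{m\times n}]$, exactly on the $S_n$-orbit of $\omega_{f,g}$, with a common non-zero coefficient $|\mathrm{Stab}(\omega_{f,g})|/n!$ on each orbit element (by orbit--stabilizer). Since distinct classes $\lVert f\rVert$ yield disjoint orbits, the corresponding symmetrizations have pairwise disjoint supports and are therefore $\AQ$-linearly independent, giving the required linear independence.

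The principal obstacle is the orbit analysis: one must keep the right $S_k$-action (identifying different presentations of the same monomium) carefully apart from the left $S_n$-action (permuting columns), and pin down $\lVert f\rVert$ as the full $S_n$-orbit invariant of an admissible monomium. Once this bookkeeping is in place, the rest is routine.
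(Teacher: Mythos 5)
Your proof is correct and follows essentially the same route as the paper: both decompose $\AQ_a[M^{m\times n}]$ into the direct summands spanned by the admissible monomials sharing a given class $\lVert f\rVert$, and check that $s$ sends the corresponding basis monomial of $\AQ_{\leq n}[y_1,\ldots,y_m]$ into that summand. The only difference is cosmetic: the paper reads off the support of $s(\omega_{\lVert f\rVert})$ directly from equation (\ref{twox}), whereas you pass through $\sy(\omega_{f,g})$ via Proposition \ref{main1x} and an orbit--stabilizer count, additionally identifying the classes as the $S_n$-orbits.
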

\begin{proof}
(We refer to the last paragraph in Subsection 1.1 for the notation 
$\lVert f \rVert$ for the orbit of $f$ under precomposition with 
permutations.)
Clearly the monomials 
$\omega_{\lVert f \rVert}$ of degree $\leq n$ make up a vector basis of 
$\AQ_{\leq n}[y_{1}, \ldots  ,y_{m}]$.
We may define an equivalence relation $\sim$ on the set of monomials of degree $k$ in $A_a[M^{m \times n}]$, namely 
 $\omega _{f, g} \sim \omega _{f', g'}$ iff 
 $\lVert f \rVert = \lVert f' \rVert$. We let 
$B_{\lVert f \rVert}$ be the equivalence class defined by 
$\lVert f \rVert$.
It follows that 
 $A_{\leq n}[M^{m \times n}]$ is a direct sum of the 
 subspaces  $V_{\lVert f \rVert }$  spanned by the
 $B_{\lVert f \rVert}$.
 We show that $s(\omega_{\lVert f \rVert })$ 
 lies in $V_{\lVert f \rVert }$;  
 recall equation (\ref{twox}) and note that for any $g$,  
 $\omega_{f,g} \in B_{\lVert f \rVert }$: 
$$
s(\omega_{\lVert f \rVert })  = 
 \prod_{l\in [k]}s_{f(l)} = 
 \sum_{[k] \mmr{g} [n]}  \; \prod _{l\in [k]}x_{f(l)g(l)} = 
 \sum_{[k] \mmr{g} [n]}  \; \omega_{f, g} .
$$ 
From the direct-sum 
 property of these linear subspaces, the injectivity of $\,s\,$ follows.
\end{proof}
\begin{remark}
 {\em A similar argument proves that also the map \mbox{$S: \AQ [y_{1},\ldots , 
 y_{m}] \to  \AQ[M^{m\times n}]$} is 
 injective.}  
 \end{remark} 
 
The surjectivity of the map $\,s\,$ is a reformulation of Proposition
\ref{main2x}. Thus, combining Propositions \ref{main2x} and  \ref{injx}, we have our main 
result:
  
\begin{thm} \label{MAIN}
The algebra map $s$ in (\ref{fivex}) is an isomorphism.
\end{thm}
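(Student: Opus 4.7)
The plan is to observe that the theorem is essentially the conjunction of the two Propositions already proved, so the proof amounts to packaging surjectivity from Proposition \ref{main2x} and injectivity from Proposition \ref{injx} into the statement about the single map $s$.

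First I would dispose of injectivity, which is exactly Proposition \ref{injx}: the key observation there is that the monomials $\omega_{\lVert f \rVert}$ of degree $\leq n$ form a basis of $A_{\leq n}[y_1, \ldots, y_m]$, and that $s$ sends each such basis element into its own summand $V_{\lVert f \rVert}$ of a direct-sum decomposition of $A_a[M^{m \times n}]$, via the formula $s(\omega_{\lVert f \rVert}) = \sum_{g} \omega_{f,g}$ from equation (\ref{twox}). Since basis elements go to nonzero vectors living in independent subspaces, $s$ is injective.

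For surjectivity I would argue as follows. Let $h \in \sy(A_a[M^{m \times n}])$. By Proposition \ref{main2x}, there exists a polynomial $G \in A[y_1, \ldots, y_m]$ with $h = G(s_1, \ldots, s_m)$, that is, $h = S(G)$ under the upper horizontal map of diagram (\ref{smaps}). Let $\bar{G} \in A_{\leq n}[y_1, \ldots, y_m]$ denote the image of $G$ under the left-hand vertical quotient map, i.e.\ truncate $G$ by discarding monomials of degree $> n$. The commutativity of (\ref{smaps}) then gives
\[ s(\bar{G}) = (\text{quotient}) \circ S(G) = (\text{quotient})(h) = h, \]
since $h$ is already admissible. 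Hence $s$ is surjective.

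The only step requiring a moment of care is seeing that $s$ is well-defined on the quotient $A_{\leq n}[y_1, \ldots, y_m] = A[y_1, \ldots, y_m]/J$; but this is built into the construction of the diagram (\ref{smaps}) and was justified there by noting that $S$ sends $J$ into $I$, equivalently that a product of $n+1$ row-sums expands, via (\ref{twox}), as a sum over monic maps $[n+1] \to [n]$, of which there are none, so it vanishes modulo $I$. There is no real obstacle: all the substantive combinatorial work has been carried out in Propositions \ref{main1x}, \ref{main2x}, and \ref{injx}, and the theorem is the clean categorical statement that collects them.
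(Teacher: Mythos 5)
Your proposal is correct and matches the paper's own argument: the theorem is obtained precisely by combining the injectivity of $s$ from Proposition \ref{injx} with the surjectivity supplied by Proposition \ref{main2x}, and your extra details (chasing $G$ through diagram (\ref{smaps}) and checking that $S$ sends $J$ into $I$ because there are no monic maps $[n+1]\to[n]$) are accurate elaborations of what the paper leaves implicit.
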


We shall paraphrase this in geometric terms: 

 \section{Geometric interpretation}\label{orbitx} 
  
 \subsection{The category of $\AQ$-algebras and its dual}
 
 The following Section only is a reminder, to fix notation etc. As 
 above,  
 ${\mathcal A}$ denotes the category of commutative $\AQ$-algebras 
 (here just called algebras.) 
 
 The dual category ${\mathcal A}^{op}$ is essentially the category of 
 affine schemes over $\AQ$.  The objects, viewed in this category, we here just  call {\em 
 spaces}, and the maps in it, we call {\em functions}. If 
 $A \in \mathcal{A}$, we denote $\overline{A} \in \mathcal{A}^{op}$ 
 the correponding space, and similarly for maps.
 
 A main object in ${\mathcal A}$  is the polynomial ring $\AQ [x]$ in one 
 variable; as a space  it is denoted 
 $R$,
 $$R:= \overline{\AQ [x]}.$$
 Because $\AQ [x]$ is the free algebra in one generator $x$, there 
 is, for any algebra $B$, 
 a 1-1 correspondence between the set of elements of $B$ and the set 
 of algebra maps $\AQ [x] \to B$, with dual notation, with the set of functions $\overline{B} 
 \to \overline{\AQ [x]}=R$. Thus, we have the basic fact: 
 
 \medskip
 
 \noindent{\em   
 elements of an algebra $B$ correspond to 
  $R$-valued functions on the space $\overline{B}$}.
  
  \medskip
 
Since $\AQ 
 [x_{1}, \ldots ,x_{n}]$ is a coproduct in ${\mathcal A}$ of $n$ copies of 
 $\AQ [x]$, it follows that 
 $\overline {\AQ [x_{1}, \ldots ,x_{n}]} = R^n$, the ``$n$-dimensional vector  space over $R$'', product of $n$ copies of $R$.
 Therefore, the elements of $\AQ 
 [x_{1}, \ldots ,x_{n}]$ correspond to functions $R^{n}\to 
 R$, explaining in tautological terms the relationship between 
 {\em polynomials} in $n$ variables and {\em functions} $R^{n}\to R$; all functions $R^{n}\to R$ in $\mathcal{A}^{op}$ are polynomial. 
 
 Any ideal $I$ in an an algebra $B$ gives quotient map 
 $B\twoheadrightarrow 
 B/I$, and hence in ${\mathcal A}^{op}$
defines  a monic function  
 $\xymatrix{ \overline{B/I}\ar@{^{(}->}[r] &\overline{B}}$.

 It is convenient to give names to some standard  spaces thus defined. 
 The space corresponding to $\AQ [y_{1}, \ldots ,y_{m}]/J$, where 
 $J\subset \AQ [y_{1}, \ldots ,y_{m}]$ is the ideal generated by monomials of 
 degree $n+1$,  is denoted $D_{n}(m)\subset R^{m}$,
 $$
 D_{n}(m) = \overline{\AQ_{\leq n}[y_{1}, \ldots ,y_{m}]},
$$ 
\noindent and deserves the name ``the $n$th infinitesimal neighbourhood of
\mbox{ $0 \in R^{m}$''.} In the standard description of finite limits with internal variables we have: 
 $$
 D_{n}(m) = \{(x_1,\, \ldots. \, x_m)\in R^{m} \,|\, \forall  \, 
 f: [n+1] \to [m] \;\;  x_{f(1)} \,  \, \ldots \, x_{f(n+1)} = 0 \}.
 $$
 Likewise with the ideal  
  $I\subseteq \AQ [M^{m\times n}]$ described in Section \ref{admissibles}. In this case we have
$$  
D_{1}(m)^{n} = \overline{\AQ_a [M^{m\times n}]}; 
$$
 this follows since 
 $\AQ [M^{m\times n}]/I$ is the coproduct in ${\mathcal A}$  of $n$ copies of $\AQ [x_{1}, \ldots ,x_{m}]/J$,  where $J$ now is the ideal generated by 
 monomials of degree $2$. With internal variables we have the description:
 $$
D_{1}(m)^{n} = \{(x_{1,1},\, \ldots \, x_{m,n})\in R^{m\times n} \, | \, \forall \,i \in [m],\; i' \in [m],\; j \in [n] \;\; x_{i,j}\, x_{i',j} = 0 \}
$$ 
which is easily understood by the isomorphism $R^{m \times n} = (R^m)^n$. 
 
\subsection{Orbit space}
 
 Let $B$ be an algebra, and let $S$ be a
finite group acting on $B$. The subalgebra
 $\sy_{S}(B) \subseteq B$ of {\em invariant} or {\em 
 symmetric} elements 
 %(see Section \ref{prelim}) 
 may be described in 
 categorical terms, in the category ${\mathcal A}$, as the 
 joint equalizer of the automorphisms of the form 
 $B \mr{\sigma} B$ over all the $\sigma , \sigma' \ldots \in S$, 
$$
 \xymatrix
     {
       \sy_{S}(B) 
                 \ar@<-1pt>@{^{(}->}[r]
      & B 
         \ar@<-1.5ex>[r]^{\vdots}_{\sigma} 
         \ar@<1.5ex>[r]^{\sigma '}
      & B
     }
.$$
 In the category ${\mathcal A}^{op}$, this becomes a  joint 
 coequalizer, thus the orbit object of the action of $S$,
$$
\xymatrix
    {
       \overline{B}/S
                     \ar@{{<<}-}[r]
     & \overline{B}
                     \ar@{{<}-}@<-1.5ex>[r]^{\vdots}_{\overline{\sigma}} 
                     \ar@{{<}-}@<1.5ex>[r]^{\overline{\sigma '}}
     & \overline{B}
    }
.$$
The isomorphism $s$ in the Theorem \ref{MAIN}, see diagram (\ref{smaps}), 
is displayed in the following commutative diagram: 
\begin{equation} \label{trix}
 \xymatrix@C=3.6ex 
      {
        \AQ[y_{1}, \ldots  ,y_{m}]  
                                   \ar[r]^S 
                                   \ar@<1ex>@{>>}[d]
      & \sy (\AQ[M^{m \times n}])  
                                  \ar@<-1pt>@{^{(}->}[r]
                                  \ar@{>>}[d] 
      & \AQ[M^{m \times n}]  
                            \ar@<-1.5ex>[r]^{\vdots}_{\sigma} 
                            \ar@<1.5ex>[r]^{\sigma '} 
                            \ar@{>>}[d]
      & \AQ[M^{m \times n}]  
                            \ar@{>>}[d]    
       \\
         \AQ_{\leq n}[y_{1}, \ldots  ,y_{m}]  
                                             %\ar@<1ex>@{^{(}->}[u]            
                                             \ar[r]^{s}_{\cong}
       & \sy (\AQ_a [M^{m \times n}])  
                                      \ar@<-1pt>@{^{(}->}[r]
       & \AQ_a[M^{m \times n}]  
                               \ar@<-1.5ex>[r]^{\vdots}_{\sigma} 
                               \ar@<1.5ex>[r]^{\sigma '}   
       & \AQ_a[M^{m \times n}]   
     }
\end{equation}
By a tautological rewriting, diagram (\ref{trix}) becomes
\begin{equation} \label{coeqx}
\xymatrix 
      {
        R^m   
            \ar@{<-}[r]^{\overline{S}}                
            \ar@<1ex>@{{<}-^{)}}[d]              
      & (R^m)^n/S_n   
                    \ar@{{<<}-}[r]
                    \ar@{{<}-^{)}}[d] 
      & (R^m)^n  
                %\ar@/_3.5ex/[ll]_{Sum}
                \ar@{{<}-}@<-1.5ex>[r]^{\vdots}_{\overline{\sigma}} 
                \ar@{{<}-}@<1.5ex>[r]^{\overline{\sigma '}} 
                \ar@{{<}-^{)}}[d]
      & (R^m)^n  
                \ar@{{<}-^{)}}[d]    
       \\
        D_n(m)  
               \ar@{<-}[r]^{\overline{s}}_{\cong}
       & D_1(m)^n/S_n  
                      \ar@{{<<}-}[r]   
       & D_1(m)^n  
                  %\ar@/^4ex/[ll]_{sum}
                  \ar@{{<}-}@<-1.5ex>[r]^{\vdots}_{\overline{\sigma}}
                  \ar@{{<}-}@<1.5ex>[r]^{\overline{\sigma '}}   
       & D_1(m)^n   
     }
\end{equation}
The composite map $(R^{m})^{n}\to R^{m}$ in the diagram is, in 
synthetic terms: ``take an $n$ tuple of vectors in $R^{m}$, and add 
them up''. It is symmetric in 
the $n$ arguments; and it restricts to a map    
$$\su :D_{(1)}(m)^{n} \to D_{n}(m).$$

Theorem \ref{MAIN} then can be expressed as follows:
\begin{thm} \label{GMAIN}
The addition map  $\su : D_1(m)^n \to D_{n}(m)$   is the quotient 
map of $D(m)^{n}$ under permutations of the $n$ factors, 
i.e.\  is universal among $S_{n}$-symmetric maps out of $D_1(m)^{n}$. 
\end{thm}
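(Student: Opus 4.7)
The plan is to deduce Theorem \ref{GMAIN} from Theorem \ref{MAIN} by dualizing the whole diagram (\ref{smaps}) through the contravariant correspondence between $\A$ and the category of spaces $\A^{op}$. Essentially all the ingredients are already arranged in the diagrams (\ref{trix}) and (\ref{coeqx}); what remains is to extract the universal property of $\su$ from them.

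First I would recall the orbit-object description: for any algebra $B$ with a finite group $S$ acting, the inclusion $\sy_S(B)\hookrightarrow B$ is the joint equalizer of the automorphisms $\sigma\in S$, so dually the map $\overline{B}\twoheadrightarrow \overline{\sy_S(B)}$ is the joint coequalizer of the $\overline{\sigma}$, i.e.\ exhibits $\overline{\sy_S(B)}$ as the orbit space $\overline{B}/S$. Applied to $B=\AQ_a[M^{m\times n}]$ with the $S_n$-action, this identifies $D_1(m)^n/S_n$ (by definition the joint coequalizer in $\A^{op}$) with $\overline{\sy(\AQ_a[M^{m\times n}])}$; this is exactly the middle vertical identification used in diagram (\ref{coeqx}).

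Next I would invoke Theorem \ref{MAIN}: the algebra isomorphism $s:\AQ_{\leq n}[y_1,\ldots,y_m]\to \sy(\AQ_a[M^{m\times n}])$ dualizes to an isomorphism $\overline{s}: D_1(m)^n/S_n \to D_n(m)$, which is the bottom-left arrow of diagram (\ref{coeqx}). Combined with the previous step, this shows that $D_n(m)$, equipped with the composite $D_1(m)^n \twoheadrightarrow D_1(m)^n/S_n \xrightarrow{\overline{s}} D_n(m)$, is the joint coequalizer of the $S_n$-action, hence is universal among $S_n$-symmetric maps out of $D_1(m)^n$.

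The only point that needs verification is that this composite coincides with the addition map $\su$. By definition, the algebra map $S$ sends the generator $y_i$ to the row-sum $s_i = x_{i,1}+\cdots+x_{i,n}$, so under the correspondence between elements of an algebra $B$ and $R$-valued functions on $\overline{B}$, the dual function $\overline{S}:(R^m)^n\to R^m$ sends an $n$-tuple of vectors $(v_1,\ldots,v_n)$ to the vector whose $i$-th coordinate is $v_1^{(i)}+\cdots+v_n^{(i)}$; that is, to $v_1+\cdots+v_n$. The commutativity of the left-hand square in (\ref{coeqx}) then shows that $\overline{s}$ composed with the quotient map restricts to $\su$. I do not foresee a real obstacle here; the argument is essentially bookkeeping, and the only thing worth double-checking is the identification of $\overline{S}$ with coordinatewise addition on the generators.
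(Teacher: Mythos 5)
Your proposal is correct and follows essentially the same route as the paper: the paper also obtains Theorem \ref{GMAIN} by dualizing diagram (\ref{trix}) into (\ref{coeqx}), using the equalizer/coequalizer description of the orbit space and the fact that $S(y_i)=s_i$ dualizes to the addition map. Nothing further is needed.
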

The special case where $m=1$ was called
``the symmetric 
functions property'' in the early days of synthetic differential 
geometry (see e.g.\ Exercise 
I.4.4 in \cite{[SDG]}); in this form, it was used  
(see e.g.\ \cite{[SDG]} 
Exercise I.8.3 and I.8.4,  or  \cite{[BD]} 
Proposition 3.4) to
establish the Formal 
Integration for vector fields: extending a vector field $D\times M 
\mr{} M$ into a ``formal flow'' $D_{\infty}\times M \mr{} M$.

\medskip

\noindent {\bf Remark.} It is not hard to prove that the 
constructions and results so far  
 can be presented in a coordinate-free way, i.e.\ referring to an 
 abstract \mbox{$m$-dimensional} vector space $V$ over $R$, rather than to 
 $R^{m}$, thus replacing e.g.\ the subspace $D_{n}(m) \subseteq 
 R^{m}$ by a subspace $D_{n}(V)$; see e.g.\ \cite{[SGM]} 1.2 for the 
 definition of this subobject.

\section{Primitives for closed differential 1-forms}\label{formsx}
The following Section is sketchy, and is included to give an 
indication of the kind of motivation from synthetic differential 
geometry that lead to the 
algebraic result stated in Theorem \ref{MAIN} or Theorem \ref{GMAIN}. 
Therefore, we do not attempt to give 
the reasoning fully explained, or
 in its full generality 
(e.g.\ replacing the space $R^{m}$ by an abstract vector space 
$V\cong R^{m}$, or even by an arbitrary manifold). 
Also, some of the 
structure involved, like the ring structure on $R$ (= the co-ring 
structure on $\AQ [x]$), we shall assume known. Details may be found 
in \cite{[SGM]}, and the references therein.

Two points $x$ and $y$ in $R^{m}$ are 
called {\em first order neighbours} if $y-x \in D_{1}(m)$. In this 
case, we write $x\sim y$. The relation $\sim$ is symmetric and 
reflexive, but not transitive.
A differential 1-form $\omega$ on $R^{m}$  may synthetically be 
described as an $R$-valued function $\omega $ defined on pairs of 1st 
order neighbour points $x,y$ in $R^{m}$,  with $\omega (x,x)=0$ for 
all $x$. It is 
{\em closed} if for any three points $x,y, 
z$ with $x\sim y$, $y\sim z$ and $x\sim z$,  we have $\omega 
(x,y)+\omega (y,z)=\omega (x,z)$. Now, in $R^{m}$, the data of a 
1-form $\omega$ may be encoded by giving a function $\Omega ( -;-): 
R^{m}\times R^{m}\to R$, linear in the argument after the semicolon, 
and such that
$$\omega (x,y)= \Omega (x;y-x)$$ for $x\sim y$.
 Closedness of $\omega$ implies that the bilinear $d\Omega 
(x;-,-): R^{m}\times R^{m}\to R$  is symmetric (see Proposition 2.2.7 
in \cite{[SGM]}). Hence, by the symmetric functions property (for the 
given $m$,  and for $n=2$), or by simple polarization, we get that  
the bilinear form $d\Omega (x;-,-)$ only depends on the sum of the 
two arguments. From this, it is easy to conclude (essentially by the 
 Taylor expansion in the proof of the quoted Proposition) that $\omega 
(x,y)+\omega (y,z)$ is independent of $y$, even without assuming that 
$x\sim z$.

If $f:R^{m} \to R$ is a 
function, we get a closed 1-form $df$ on $M$ by  $df(x,y):= 
f(y)-f(x)$. If $\omega =df$, we say that $f$ is a {\em primitive} of 
$\omega$.
 We may attempt 
to find a primitive $f$ of a 
given closed 1-form $\omega$, in a neighbourhood of the form $x_{0}+ D_{n}(m)$, 
where  $x_{0}\in R^{m}$. For a chain $x_{0}\sim x_{1}\sim \ldots 
\sim x_{n}$ (with each $x_{i}\sim x_{i+1})$, we want to define 
$f(x_{n})$ by the sum
\begin{equation}\label{attemptx}\omega (x_{0},x_{1})+\omega(x_{1},x_{2})+ \ldots + \omega 
(x_{n-1},x_{n});\end{equation} is this ``definition'' of $f(x_{n})$ independent 
of the ``interpolating points'' $x_{1}, \ldots ,x_{n-1}$ ?
 We may write 
$x_{i+1}= x_{i}+d_{i+1}$ with $d_{i}\in D(V)$ \mbox{($i=0, \ldots ,n-1$)}. In this case, the 
first question is whether the proposed value of $f(x_{0}+d_{1}+ \ldots 
+d_{n})$ is independent the individual $d_{i}$s ($i<n$) and only 
depends on 
their sum. By the symmetric functions property, this will follow if 
the sum 
is independent of the order in which we take the increments $d_{i}$.
But this independence follows because closedness of $\omega$ implies
$\omega (x,x+d)+\omega (x+d,x+d+d') = \omega (x,x+d')+ \omega (x+d', 
x+d+d')$, thus two consecutive summands in the proposed chain of 
$d_{i}$s may be interchanged; and such transpositions generate the 
whole of $S_{n}$. So Theorem \ref{GMAIN} allows 
us to define $f:x_{0}+D_{n}(m) \to R$ by the formula (\ref{attemptx}). 

It is then easy to conclude that $f(y)-f(x) = \omega (x,y)$ for any 
$y\sim x$, for any $x$ in the ``formal neighbourhood of $x_{0}$'' 
(meaning the set of points which can be reached by a chain $x_{0}\sim 
x_{1}\sim \ldots \sim x$, starting in 
$x_{0}$. So $f$ is a primitive of $\omega$ on this formal 
neighbourhood.

\bigskip

\bigskip

\small

\medskip

\noindent Eduardo J.\ Dubuc,  Dept.\ de Matematicas, Univ.\ de Buenos Aires, and IMAS, UBA-CONICET, Argentina.  \\ 
\url{edubuc@dm.uba.ar}

\medskip

\noindent Anders Kock,  Dept.\ of Mathematics, Aarhus Universitet, 
Denmark\\ 
\url{kock@math.au.dk}

\bigskip

\noindent Buenos Aires, Montreal and Aarhus

\noindent December 2018


\begin{thebibliography}{99}
\bibitem{[BD]} M.\ Bunge and E.\ Dubuc, Local Concepts in Synthetic 
Differential Geometry and Germ Representability, in {\em Mathematical 
Logic and Theoretical Computer Science}, ed.\  Kueker, 
Lopez-Escobar and Smith,  
Marcel Dekker 1987.
\bibitem{[SDG]} A.\ Kock, {\em Synthetic Differential Geometry}, London 
Math.\ Soc.\ Lecture Notes Series no.\ 51, Cambridge Univ.\ Press 1981
(2nd ed.\  London 
Math.\ Soc.\ Lecture Notes Series no.\ 333, Cambridge Univ.\ Press 
2006).
\bibitem{[SGM]} A.\ Kock, {\em Synthetic Geometry of Manifolds}, 
Cambridge Tracts in Mathematics no.\ 180, Cambridge Univ.\ Press 2010.
\bibitem{[vdWI]} B.L.\ van der Waerden, {\em Algebra} Vol.\  I, 
Grundlehren der Math.\ Wissenschaften no.\ 33, 5th ed.\, Springer Verlag 
1960.

\end{thebibliography}
  \end{document}